\documentclass[letterpaper, 10 pt, conference]{ieeeconf}  

\usepackage{amsmath}
\usepackage{amssymb}
\usepackage{comment}
\usepackage{mathtools}

\usepackage{xcolor}
\usepackage{circuitikz}
\usepackage{tikz}
\usepackage{hyperref}
\usepackage[capitalize]{cleveref}
\usepackage{subcaption}
\usepackage{algorithm}
\usepackage{algorithmic}
\usetikzlibrary{arrows.meta}
\usetikzlibrary{positioning}
\usetikzlibrary{calc}
\mathtoolsset{showonlyrefs}
\hypersetup{
    colorlinks=true,
    linkcolor=black,
    citecolor=black,
    urlcolor=black
}

\newcommand{\Mset}{\langle M\rangle}
\renewcommand{\Re}{\mathbb{R}}
\newcommand{\Ne}{\mathbb{N}}

\newtheorem{theorem}{Theorem}
\newtheorem{lemma}{Lemma}
\newtheorem{proposition}{Proposition}
\newtheorem{definition}{Definition}
\newtheorem{remark}{Remark}

\newtheorem{problem}{Problem}
\newif\ifextended
\extendedtrue 
\IEEEoverridecommandlockouts                              
\title{\LARGE \bf
Iterative graph lifting for automatic design of path-complete stability certificates
}

\author{Léa Ninite and Raphaël M.~Jungers
\thanks{L.~Ninite is a Research Fellow of the Fonds de la Recherche Scientifique~–~FNRS. R.~M.~Jungers is a FNRS honorary Research Associate. This project has
received funding from the European Research Council (ERC) under the
European Union’s Horizon 2020 research and innovation programme under
grant agreement No 864017 - L2C, from the Horizon Europe programme
under grant agreement No101177842 - Unimaas, and from the ARC (French
Community of Belgium)~-~project name: SIDDARTA. L.~Ninite and R.~M.~Jungers are with the
ICTEAM, UCLouvain (Louvain-la-Neuve, Belgium). E-mail addresses: {\tt\small \{lea.ninite, raphael.jungers\}@uclouvain.be}}%
}

\begin{document}

\maketitle
\thispagestyle{empty}
\pagestyle{empty}

\begin{abstract}
Stability of switched linear systems under arbitrary switching is a fundamental problem in control theory, closely related to the joint spectral radius (JSR), which characterizes the worst-case growth rate of system trajectories.
In this paper, we contribute to the path-complete approach for approximating the JSR. This framework constructs algebraic stability certificates using labeled directed graphs, known as path-complete graphs. These certificates can be computed via an optimization problem. We propose an iterative algorithm that refines path-complete graphs in an efficient and parsimonious manner. The algorithm relies on a graph-theoretic analysis of the optimality conditions of the underlying optimization problem. In particular, we derive a sufficient condition under which the exact JSR is attained by a given path-complete graph. When this condition is not satisfied, we identify bottleneck nodes by analyzing the graph induced by the active constraints. We then use this information to refine the path-complete graph via local graph lifting (node splitting), and repeat the procedure. Numerical experiments demonstrate the effectiveness and scalability of the proposed approach, outperforming state-of-the-art methods on all challenging instances tested.
\end{abstract}

\section{INTRODUCTION}
Switched systems arise naturally in many areas of control and dynamical
systems~\cite{liberzon2003switching,jungers2009joint}. They appear in a
variety of applications including networked control systems~\cite{donkers2011stability}, mechanical and engineering systems~\cite{blanchini2012constant}, and biological or epidemiological models~\cite{hernandez2011optimal}.

A fundamental problem in the analysis of such systems is the assessment
of their stability under \emph{arbitrary} switching. For discrete-time switched
linear systems, this question is closely related to the
\emph{joint spectral radius} (JSR), introduced by Rota and Strang~\cite{rota1960jsr}. The JSR captures the worst-case growth of products
of matrices taken from a finite set and characterizes stability under arbitrary switching. However, computing or even
approximating the JSR is known to be computationally challenging in
general~\cite{blondel2000boundedness,tsitsiklis1997lyapunov}; see also~\cite{jungers2009joint} for a survey of the theory and applications of
the JSR.

Classical methods for certifying stability construct a \emph{common Lyapunov function} within a prescribed function class (e.g., quadratic functions that are LMI-representable~\cite{boyd1994lmi}) by solving an optimization problem. However, these methods suffer from conservatism induced by this restriction, as a valid common Lyapunov function may require a richer representation than the prescribed class.

To reduce this conservatism, richer classes of Lyapunov certificates have been proposed, including polynomial Lyapunov functions computed via \emph{sum-of-squares} (SOS) techniques~\cite{parrilo2008approximation} and approaches based on \emph{multiple Lyapunov functions}~\cite{branicky1998multiple, lee2006uniform}.

More recently, the \emph{path-complete Lyapunov framework} introduced in~\cite{ahmadi2014joint} appeared as a unifying perspective. It associates Lyapunov functions with the nodes of a
labeled directed graph and enforces decrease conditions along its
edges. If the graph satisfies a \emph{path-completeness} property (see
\Cref{def:PC} below), these inequalities guarantee that the resulting
optimization problem provides an upper bound on the JSR.

From early numerical experiments, it quickly became clear that the
choice of the path-complete graph is crucial for the quality of the
resulting bound on the JSR. Empirically, larger graphs (in terms
of their number of nodes or edges) tend to provide tighter bounds.
However, constructing suitable graphs for a given system remains a
challenging task, and only a few systematic families of path-complete
graphs are currently known.

A notable systematic construction arises from Lyapunov functions
that depend on a finite history of the switching signal. This idea
appeared in early work on switch-dependent quadratic Lyapunov
functions~\cite{bliman2003stability}. In the path-complete Lyapunov framework, these
constructions correspond to \emph{De Bruijn graphs}~\cite{de1948combinatorial}. Increasing the graph order yields tighter JSR bounds that converge asymptotically to the true value~\cite[Theorem~6.1]{ahmadi2014joint}. However, the
size of these graphs grows exponentially with the order, leading to
large optimization problems.

This motivates the search for systematic procedures to refine path-complete graphs and obtain tighter JSR bounds without the exponential graph growth of memory-based constructions such as De Bruijn graphs.

To this end, we propose a theoretical analysis of the link
between the optimality conditions of the optimization problem
and the topological structure of the graph. This analysis leads to a systematic
procedure to refine path-complete graphs in a parsimonious and efficient
manner. 
\paragraph*{Contributions} Our contributions are threefold.

First, we introduce a local graph transformation, referred to as a \emph{lifting} operation. Given a node $v$, the lift splits $v$ into multiple copies, one for each of its out-neighbors, thereby refining the graph locally without uniformly increasing the memory of the switching signal as in De Bruijn constructions. This transformation is shown to preserve path-completeness, a particularly desirable property since verifying path-completeness of a labeled graph is PSPACE-complete~\cite[Theorem~4]{jungers2017characterization}.

Second, we analyze the structure of the subgraph induced by the tight
constraints of the path-complete Lyapunov
optimization problem. We show that this structure provides a simple
certificate to determine whether a given path-complete graph already
yields the exact joint spectral radius.

Finally, we exploit this insight to design an iterative algorithm that
refines path-complete graphs through local lifting operations. The
algorithm uses the structure of the tight constraints to identify nodes
whose lifting may improve the bound on the JSR.
\paragraph*{Related work}
Graph lifting operations were considered in the context of
polyhedral path-complete Lyapunov functions~\cite{athanasopoulos2019polyhedral}, where several candidate lifts are
generated and the one yielding the best JSR bound is selected after
solving the corresponding optimization problems. Our lifting operation
differs from that of~\cite{athanasopoulos2019polyhedral}, and is coupled
with an analysis of the optimization problem itself: rather than
selecting among candidate lifts, we exploit the structure induced by
the tight constraints of the optimal solution to determine which lifting operation
may improve the bound.

More recently,~\cite{debauche2026} proposed an SMT-based method to construct path-complete stability certificates. While this approach produces compact path-complete graphs, it suffers from the limited scalability of SMT solvers. In contrast, we combine optimization theory and graph-theoretic analysis to iteratively build efficient and scalable stability certificates.
\paragraph*{Outline}
The remainder of the paper is organized as follows.
Section~\ref{sec:preliminaries} reviews the necessary background.
Section~\ref{sec:lifting} introduces the proposed lifting operation and
demonstrates its main properties.
Section~\ref{sec:tight} analyzes the structure of the tight subgraph
(see \Cref{def:tight_subgraph}) and establishes the optimality
certificate.
Section~\ref{sec:algorithm} presents the lifting algorithm.
Finally, Section~\ref{sec:experiments} illustrates the effectiveness of
the proposed approach and its improved scalability compared to De Bruijn constructions.

\paragraph*{Notations}
Given $M\in \mathbb N$, we let $\Mset\coloneqq \{1,\dots,M\}$. 
For a set of matrices $\mathcal A=\{A_1,\dots,A_M\}\subseteq \Re^{n\times n}$, 
$\rho(\mathcal A)$ denotes the joint spectral radius\ifextended, while for a matrix 
$A \in \Re^{n\times n}$, $\rho(A)$ denotes the spectral radius.
\else.
\fi
\section{PRELIMINARIES}\label{sec:preliminaries}
\subsection{Problem: stability of arbitrarily switching systems}

Consider a finite set of matrices $\mathcal A = \{A_1,\dots,A_M\} \subset \mathbb{R}^{n\times n}$.
A \emph{switched linear system} is defined as
\begin{equation}
x_{k+1} = A_{\sigma(k)} x_k, \label{eq:switched_syst}
\end{equation}
where $x_k\in\mathbb{R}^n$ is the \emph{state} at time $k$, and
$\sigma(k)\in\Mset$ is the \emph{mode} at time $k$.
The function $\sigma : \mathbb{N} \to \Mset$ is called the
\emph{switching signal}.

We consider \emph{arbitrary switching}, meaning that the switching signal is viewed as an external \emph{uncontrolled} signal.

The stability of the arbitrarily switching system \eqref{eq:switched_syst}
is characterized by the \emph{joint spectral radius} (JSR) of the matrix
set $\mathcal A$. The joint spectral radius of $\mathcal A$ is defined as
\begin{equation}
\rho(\mathcal A) \coloneqq
\lim_{k\to\infty}
\max_{\sigma\in\Mset^k}
\|A_{\sigma_k} \cdots A_{\sigma_2} A_{\sigma_1}\|^{1/k},
\end{equation}
and is independent of the chosen norm.

The JSR corresponds to the worst-case asymptotic growth
rate of all products of matrices in $\mathcal A$. In particular, the
switched system \eqref{eq:switched_syst} is stable
if and only if $\rho(\mathcal A) < 1$ (see e.g.~\cite[Corollary 1.1]{jungers2009joint}).

Computing the JSR is
notoriously difficult and determining whether $\rho(\mathcal A)\le 1$
is an undecidable problem~\cite[Theorem 2]{blondel2000boundedness}. As a consequence,
most existing approaches aim at computing tractable \emph{bounds}
on $\rho(\mathcal A)$.

\subsection{The path-complete framework}
Among the existing approaches to approximate the JSR, the \emph{path-complete Lyapunov framework}, introduced in~\cite{ahmadi2014joint},
provides a systematic way to derive tractable stability conditions and
upper bounds on the JSR using directed labeled graphs and multiple
Lyapunov functions.

A directed labeled
graph on $\Mset$ is a pair $G=(S,E)$, where $S$ is a finite set
of nodes and $E \subseteq S \times S \times \Mset$ is a set of
labeled edges. An element $(a,b,i)\in E$ represents a directed edge from
$a$ to $b$ labeled by $i$, corresponding to a transition under mode $i$.

Intuitively, a directed labeled graph is path-complete if every possible switching
sequence can be realized as a path in the graph. This is formalized below.

\begin{definition}[Path-complete graph] \label{def:PC}
Let $G = (S,E)$ be a directed labeled graph on $\Mset$.
We say that $G$ is \emph{path-complete} if for any $K\in\mathbb{N}_{>0}$
and any sequence $\sigma = (i_1,\ldots,i_K)\in\Mset^K$, there
exists a path 
$\{(a_k,a_{k+1}, i_k)\}_{k=1}^K\subseteq E$.
\end{definition}

\Cref{fig:pc-graph} shows a path-complete graph. The graph
in~\Cref{fig:nonpc-graph} is not path-complete; for instance,
it cannot generate the sequence $22$.
\begin{figure}[ht] 
\centering 
\subfloat[]{%
\begin{tikzpicture}[>=Stealth, scale=1, transform shape, node distance=2.3cm, vertex/.style={circle, draw, minimum size=0.7cm, font=\small, fill=gray!20}] \node[vertex] (V1) {$a_1$}; \node[vertex, right of=V1] (V2) {$a_2$}; \path[->] (V1) edge[loop above] node{$1$} (V1); \path[->] (V2) edge[loop above] node{$2$} (V2); \path[->] (V1) edge[bend left=20] node[above]{$1$} (V2); \path[->] (V2) edge[bend left=20] node[below]{$2$} (V1); \end{tikzpicture} \label{fig:pc-graph} } \hspace{1.3cm} \subfloat[]{%
\begin{tikzpicture}[>=Stealth, scale=1, transform shape, node distance=2.3cm, vertex/.style={circle, draw, minimum size=0.7cm, font=\small, fill=gray!20}] \node[vertex] (V1) {$a_1$}; \node[vertex, right of=V1] (V2) {$a_2$}; \path[->] (V1) edge[loop above] node{$1$} (U1); \path[->] (V1) edge[bend left=20] node[above]{$1$} (V2); \path[->] (V2) edge[bend left=20] node[below]{$2$} (V1); \end{tikzpicture} \label{fig:nonpc-graph}} \caption{(a) Path-complete graph with two nodes, for a system with two switching modes. (b) Graph not path-complete.} \label{fig:pc-vs-copc} \end{figure}

Path-complete graphs are used to certify stability of arbitrarily switching systems by associating a Lyapunov function to each node and enforcing decrease conditions along the edges of the graph. We formalize this
idea below.
\smallskip
\begin{definition}[Path-complete Lyapunov function]
Given $\mathcal A=\{A_1,\dots,A_M\}\subset\mathbb{R}^{n\times n}$, a
\emph{path-complete Lyapunov function} (PCLF) for $\mathcal A$ is a pair
$(G,V_S)$, where $G=(S,E)$ is a path-complete graph on $\Mset$ and
$V_S\coloneqq\{V_a:\mathbb{R}^n\to\mathbb{R}_{\ge0}\}_{a\in S}$ is a set of continuous, positive definite,
and radially unbounded functions satisfying
\[
V_b(A_i x) \le V_a(x),
\qquad \forall (a,b,i)\in E,\ \forall x\in\mathbb{R}^n.
\]
\end{definition}

\smallskip

The existence of a PCLF provides bounds on the joint spectral radius, as formalized in the following theorem.
\begin{theorem}[{\cite[Theorem~2.4]{ahmadi2014joint}}]
\phantomsection
\label{thm:ub_jsr}
Let $\mathcal A=\{A_1,\dots,A_M\}$ and
$G$ be a path-complete graph. For $\gamma>0$, set
$\mathcal A_\gamma \coloneqq \{\gamma^{-1}A_i : i\in\Mset\}$.
If $\mathcal A_\gamma$ admits a PCLF on $G$, then
$\rho(\mathcal A)\le \gamma$.
\end{theorem}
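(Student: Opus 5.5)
The plan is to show that any product of $k$ matrices from $\mathcal A_\gamma$ stays uniformly bounded as $k$ grows. That forces $\rho(\mathcal A_\gamma)\le 1$, and homogeneity of the JSR then gives $\rho(\mathcal A)=\gamma\,\rho(\mathcal A_\gamma)\le\gamma$.

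\emph{Step 1: sandwich the node functions by one function.} The node set $S$ is finite, and each $V_a$ is continuous, positive definite and radially unbounded. Fix the Euclidean norm. Define
\[
\alpha(r)\coloneqq \min_{a\in S}\ \min_{\|x\|\ge r} V_a(x), \qquad \beta(r)\coloneqq \max_{a\in S}\ \max_{\|x\|\le r} V_a(x).
\]
The function $\beta$ is finite by continuity and compactness. We have $\alpha(r)>0$ for $r>0$ by positive definiteness, continuity and radial unboundedness: the infimum over $\{\|x\|\ge r\}$ is attained on a compact set, because outside a large ball $V_a$ exceeds its value at any fixed point. Moreover $\alpha(r)\to\infty$ as $r\to\infty$ by radial unboundedness. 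Both functions are monotone nondecreasing, and $\alpha(\|x\|)\le V_a(x)\le\beta(\|x\|)$ for all $a\in S$ and all $x$.

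\emph{Step 2: propagate the decrease along a path.} Take any $k$ and any sequence $(i_1,\dots,i_k)\in\Mset^k$. By \Cref{def:PC} there is a path $\{(a_j,a_{j+1},i_j)\}_{j=1}^k\subseteq E$. Write $B_i=\gamma^{-1}A_i$ and $x_j=B_{i_{j-1}}\cdots B_{i_1}x$, with $x_1=x$. The PCLF inequalities give $V_{a_{j+1}}(x_{j+1})\le V_{a_j}(x_j)$ for each $j$, and chaining them yields $V_{a_{k+1}}(x_{k+1})\le V_{a_1}(x)$. For $\|x\|\le 1$ this gives $\alpha(\|x_{k+1}\|)\le\beta(1)$. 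Since $\alpha$ grows to infinity, there is $R<\infty$, independent of $k$ and of the sequence, with $\|x_{k+1}\|\le R$. Hence $\|B_{i_k}\cdots B_{i_1}\|\le R$ for every product of length $k$.

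\emph{Step 3: conclude.} From Step 2, $\max_{\sigma\in\Mset^k}\|B_{\sigma_k}\cdots B_{\sigma_1}\|^{1/k}\le R^{1/k}\to 1$, so $\rho(\mathcal A_\gamma)\le 1$. The definition of the JSR is positively homogeneous, so $\rho(\mathcal A)=\gamma\rho(\mathcal A_\gamma)\le\gamma$.

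The only delicate step is Step 1. The node functions need not be homogeneous or quadratic, so a direct norm equivalence is not available. The argument therefore has to rely on finiteness of $S$ and on the three regularity properties, in order to obtain a single pair of comparison functions $\alpha$ and $\beta$ valid for all nodes. Once these are in place, path-completeness and the edge inequalities make the rest immediate. The point to watch is that the bound $R$ must be uniform over all paths, which holds because $\alpha$ and $\beta$ do not depend on the nodes.
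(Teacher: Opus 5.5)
Your proof is correct. The paper states this result without proof and defers to \cite[Theorem~2.4]{ahmadi2014joint}, whose argument is the same as yours: chain the edge inequalities along the path given by path-completeness, conclude that all products from $\mathcal A_\gamma$ are uniformly bounded so that $\rho(\mathcal A_\gamma)\le 1$, and rescale. Your comparison functions $\alpha,\beta$ are the adaptation this paper's setting needs. As I recall, the cited reference takes the node functions to be homogeneous, which gives $c_1\|x\|^d\le V_a(x)\le c_2\|x\|^d$ directly, whereas this paper's PCLF definition only asks for continuity, positive definiteness and radial unboundedness.
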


In practice, the functions $V_a$ are often restricted to a tractable
class. For quadratic Lyapunov functions, the smallest $\gamma$
satisfying the Lyapunov inequalities associated with $G$ is obtained by solving the following optimization problem.

\begin{problem}
\phantomsection
Let $G=(S,E)$ be a directed labeled graph on $\Mset$ and let
$\mathcal A=\{A_1,\dots,A_M\}$ be a set of matrices. We consider the
following optimization problem:
\begin{align*}
\begin{aligned}
\inf_{\{P_a\}_{a\in S}, \gamma\ge 0} \quad & \gamma \\[1mm]
\text{s.t.} \quad
& \gamma^2 P_a \succeq A_i^\top P_b A_i, && \forall (a,b,i)\in E, \\[1mm]
& P_a \succ 0, && \forall a\in S .
\end{aligned}
\end{align*}
\label{prob:graph_opt}
\end{problem}

We denote by $\gamma^\star(G)$ the optimal value of
\Cref{prob:graph_opt} on~$G$. 
\begin{remark}
The theoretical results in this paper rely on a subgraph
$\tilde G=(S,\tilde E)$ satisfying
$\gamma^\star(\tilde G)=\gamma^\star(G)$. We refer to such a set
$\tilde E\subseteq E$ as a \emph{support set}. In practice, one easily obtains a support set from a numerical solution by identifying the tight edges (see Definition~\ref{def:tight_subgraph} below), although such a procedure may fail when the optimum is not attained. In this work, we make the assumption that it does, for the sake of simplicity. The theoretical results could, however, be extended to any support set.
\end{remark}

If $G$ is path-complete, then
$
\rho(\mathcal A) \le \gamma^\star(G)
$. The quality of this upper bound $\gamma^\star(G)$ strongly depends on the
choice of the graph $G$. To construct path-complete graphs that yield tighter bounds,
we introduce in the next section a local lift that lies at the core of our algorithm.

\section{THE FORWARD LIFT}\label{sec:lifting}
The forward lift of a graph $G$ with respect to a node $v$ splits $v$ into one copy
for each out-neighbor of $v$.\footnote{One can analogously define a dual notion, the \emph{backward lift}, by considering in-neighbors instead of out-neighbors.}

More formally, letting
\[
\mathrm{Post}(v) \coloneqq
\{ w \in S : \exists i,\ (v,w,i)\in E \}
\]
denote the set of out-neighbors of $v$, the forward lift is defined as follows.
\begin{definition}[Forward lift]
Let $G=(S,E)$ be a directed labeled graph on $\Mset$ and consider $v \in S$. The \emph{forward lift} of $G$ with respect to $v$ is the directed graph $G^{v}=(S^{v},E^{v})$, where the node set is defined by
\[
S^{v} \coloneqq (S \setminus \{v\}) \;\cup\; \{ v^{(w)} : w \in \mathrm{Post}(v) \}
\]
and the edge set is defined by
\begin{equation}\label{eq:Ev}
E^{v} \coloneqq E_1 \cup E_2 \cup E_3 \cup E_4,
\end{equation}
where
\begin{align*}
E_1 &\coloneqq \{ (a,b,i) \in E : a\neq v, b\neq v \}, \\
E_2 &\coloneqq \{ (a, v^{(w)}, i) : (a,v,i) \in E, \ w \in \mathrm{Post}(v), \ a \ne v\}, \\
E_3 &\coloneqq \{ (v^{(w)}, w, i) : (v,w,i) \in E, \ w\ne v \}, \\
E_4 &\coloneqq \{ (v^{(v)}, v^{(w)}, i) : (v,v,i) \in E, \ w \in \mathrm{Post}(v) \}.
\end{align*}
\vspace{0.05em}
\end{definition}

An example of forward lift is shown in~\Cref{fig:forward-lift}.
\begin{figure}[H]
\centering
\begin{tikzpicture}[>=Stealth, scale=1, transform shape,
    node distance=2.2cm,
      vertex/.style={
        circle,
        draw,
        fill=gray!20,
        minimum width=1.1cm,
        minimum height=1.1cm,
        inner sep=0pt,
        font=\small
    }]

\node[vertex] (V11) {$a_1^{(a_1)}$};
\node[vertex, right of=V11] (V2) {$a_2$};
\node[vertex, below of=V11] (V12) {$a_1^{(a_2)}$};

\path[->] (V11) edge[loop above] node{$1$} (V11);
\path[->] (V2) edge[loop above] node{$2$} (V2);

\path[->] (V11) edge[bend right=15] node[left]{$1$} (V12);
\path[->] (V12) edge[bend left=15] node[above]{$1$} (V2);

\path[->] (V2) edge[bend right=15] node[above]{$2$} (V11);
\path[->] (V2) edge[bend left=15] node[below]{$2$} (V12);

\end{tikzpicture}
\caption{Forward lift of the graph in \Cref{fig:pc-graph} with respect to node $a_1$.}
\label{fig:forward-lift}
\end{figure}
In the theorem below, we prove that the forward lift preserves path-completeness. \ifextended \else Due to space constraints, all the proofs in this paper are omitted and can be found in the extended version~\cite{ninite2026iterativegraphliftingautomatic}.\fi
\begin{theorem}
Let $G=(S,E)$ be a path-complete graph. Then, for any node $v\in S$, the forward lift of $G$ with respect to $v$ is path-complete.
\end{theorem}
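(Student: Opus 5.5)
The plan is to show directly that every path of $G$ can be transformed into a path of $G^{v}$ with the same label sequence. Fix $K$ and a sequence $\sigma=(i_1,\dots,i_K)$. Since $G$ is path-complete, there is a path $(a_1,a_2,i_1),\dots,(a_K,a_{K+1},i_K)$ in $E$. Applying path-completeness to the longer sequence $(i_1,\dots,i_K,i_{K+1})$ with an arbitrary extra label $i_{K+1}$, we may assume the path extends one more step to some $a_{K+2}$. Then every node $a_k$ with $k\le K+1$ has a successor $a_{k+1}$ on the path, and in particular $a_{k+1}\in\mathrm{Post}(a_k)$.

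Next we define a node map from the path into $S^{v}$:
\[
\phi(a_k)\coloneqq \begin{cases} a_k, & a_k\neq v,\\ v^{(a_{k+1})}, & a_k = v,\end{cases}\qquad k=1,\dots,K+1.
\]
This is well defined because $a_{k+1}\in\mathrm{Post}(v)$ whenever $a_k=v$. In words, each visit to $v$ is replaced by the copy indexed by the node visited next.

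We then verify, by a four-way case split, that each edge $(\phi(a_k),\phi(a_{k+1}),i_k)$ lies in $E^{v}$ for $k=1,\dots,K$.
\begin{itemize}
\item If $a_k\neq v$ and $a_{k+1}\neq v$, the edge is in $E_1$.
\item If $a_k\neq v$ and $a_{k+1}=v$, then $\phi(a_{k+1})=v^{(a_{k+2})}$ with $a_{k+2}\in\mathrm{Post}(v)$, so the edge is in $E_2$. This is exactly where the extension to $a_{K+2}$ is needed when $k=K$.
\item If $a_k=v$ and $a_{k+1}=w\neq v$, then $\phi(a_k)=v^{(w)}$ and the edge $(v^{(w)},w,i_k)$ is in $E_3$.
\item If $a_k=a_{k+1}=v$, then $\phi(a_k)=v^{(v)}$ and $\phi(a_{k+1})=v^{(a_{k+2})}$, and the edge is in $E_4$ because $(v,v,i_k)\in E$ and $a_{k+2}\in\mathrm{Post}(v)$.
\end{itemize}
These cases are exhaustive, so the images form a path in $G^{v}$ labeled by $\sigma$, and $G^{v}$ is path-complete.

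The main obstacle is the boundary issue at the final node. If $a_{K+1}=v$, the copy of $v$ needs a successor, which the original path does not supply. The extension step handles this: path-completeness also holds for length $K+1$, and $\mathrm{Post}(v)\neq\emptyset$ follows from path-completeness whenever $v$ lies on any path. Beyond that, it only remains to check that the indices in $E_2$ and $E_4$ are the \emph{next} node, matching the definition of $\phi$, which the case analysis confirms.
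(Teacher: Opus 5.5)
Your proof is correct and follows essentially the paper's argument: each visit $a_k=v$ is replaced by the copy $v^{(a_{k+1})}$ indexed by the next node, and the result is checked edge by edge against $E_1,\dots,E_4$ with the same four-way case split. The only difference is at the endpoint. When $a_{K+1}=v$, the paper simply picks an arbitrary $w\in\mathrm{Post}(v)$. Your choice of a path for a sequence of length $K+1$ is slightly more careful, because it supplies a successor even if the original path ends at a node $v$ with $\mathrm{Post}(v)=\emptyset$. For the same reason, your closing remark that $v$ lying on a path forces $\mathrm{Post}(v)\neq\emptyset$ is false as stated when $v$ is the last node of the path, but your argument never uses it.
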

\ifextended
\begin{proof}
Let $K\in \mathbb N_{>0}$ and let $\sigma=(i_1,\ldots,i_K)\in \Mset^K$ be any sequence of labels. Since $G$ is path-complete, there exists a path $\{(a_k,a_{k+1},i_k)\}_{k=1}^K \subseteq E$.

Let $v\in S$ and let $G^v=(S^v,E^v)$ be the forward lift of $G$ with respect to $v$. We construct a path in $G^v$ with the sequence of labels $\sigma$. Define $b_1,\ldots,b_{K+1}\in S^v$ as follows. 
If $a_k\neq v$, set $b_k=a_k$. If $a_k=v$ and $k\le K$, set $b_k=v^{(a_{k+1})}$. 
If $a_{K+1}=v$, choose any $w\in\mathrm{Post}(v)$ and set $b_{K+1}=v^{(w)}$.

We verify that $(b_k,b_{k+1},i_k)\in E^v$ (defined in \eqref{eq:Ev}) for all $k=1,\ldots,K$:
\begin{itemize}
\item if $a_k\neq v$ and $a_{k+1}\neq v$, then $(b_k,b_{k+1},i_k)\in E_1$,
\item if $a_k\neq v$ and $a_{k+1}=v$, then $(b_k,b_{k+1},i_k)\in E_2$,
\item if $a_k=v$ and $a_{k+1}\neq v$, then $(b_k,b_{k+1},i_k)\in E_3$,
\item if $a_k=v$ and $a_{k+1}=v$, then $(b_k,b_{k+1},i_k)\in E_4$.
\end{itemize}

Therefore, $\{(b_k,b_{k+1},i_k)\}_{k=1}^K \subseteq E^v$, which shows that $G^v$ is path-complete.
\end{proof}
\fi

The following theorem shows that solving \Cref{prob:graph_opt} on a forward lift of a graph (not necessarily path-complete) yields a \emph{relaxation} of the problem on the original graph.
\begin{theorem}
\phantomsection
\label{thm:relaxation_forward}
Let $G=(S,E)$ be a directed labeled graph and let $v\in S$.
Let $G^v$ be the forward lift of $G$ with respect to $v$.
Then $\gamma^\star (G^v) \le \gamma^\star (G)$.
\end{theorem}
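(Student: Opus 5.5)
The argument shows that every feasible point of \Cref{prob:graph_opt} on $G$ produces a feasible point on $G^v$ with the same value of $\gamma$. Taking the infimum then gives $\gamma^\star(G^v)\le\gamma^\star(G)$.

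Let $(\{P_a\}_{a\in S},\gamma)$ be feasible for \Cref{prob:graph_opt} on $G$. On $G^v$ we keep $\gamma$ and define the matrices by duplication: $Q_a\coloneqq P_a$ for every $a\in S\setminus\{v\}$, and $Q_{v^{(w)}}\coloneqq P_v$ for every $w\in\mathrm{Post}(v)$. Positive definiteness is inherited, so it remains to check the edge constraints $\gamma^2 Q_a\succeq A_i^\top Q_b A_i$ for every $(a,b,i)\in E^v$. We treat the four parts of \eqref{eq:Ev} in turn.

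\begin{itemize}
\item An edge in $E_1$ is an edge of $E$ with unchanged matrices at both endpoints, so its constraint holds verbatim.
\item An edge $(a,v^{(w)},i)\in E_2$ comes from $(a,v,i)\in E$. Its constraint is $\gamma^2P_a\succeq A_i^\top P_vA_i$, which holds.
\item An edge $(v^{(w)},w,i)\in E_3$ comes from $(v,w,i)\in E$ with $w\neq v$. Its constraint is $\gamma^2P_v\succeq A_i^\top P_wA_i$, which holds.
\item An edge $(v^{(v)},v^{(w)},i)\in E_4$ comes from the self-loop $(v,v,i)\in E$. Its constraint is $\gamma^2P_v\succeq A_i^\top P_vA_i$, which holds.
\end{itemize}

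Hence the feasible set of $\gamma$ values on $G$ is contained in the feasible set on $G^v$, and the inequality between the infima follows. This holds whether or not the optimum is attained, and it does not use path-completeness.

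No step is a real obstacle; the only care needed is bookkeeping. Every edge of $E^v$ must be checked to project, under the map $v^{(w)}\mapsto v$, onto an edge of $E$ with the same label. The definition of $E^v$ guarantees this. Note that the reverse inequality generally fails: the copies $v^{(w)}$ can carry distinct matrices, and that extra freedom is exactly what makes the bound on $G^v$ potentially tighter.
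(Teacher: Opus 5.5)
Your proof is correct and matches the paper's argument: keep $\gamma$ and the matrices $P_a$ for $a\neq v$, copy $P_v$ onto every $v^{(w)}$, and check the four edge families $E_1$--$E_4$ one at a time. Your extra remarks, that the argument does not need the optimum to be attained and does not use path-completeness, are accurate but leave the route unchanged.
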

\ifextended
\begin{proof}
Let $(\gamma,\{P_a\}_{a\in S})$ be feasible for~\Cref{prob:graph_opt}
on $G=(S,E)$. We construct a feasible solution for the forward lift $G^v=(S^v,E^v)$ with the same value $\gamma$.

We define matrices on $S^v$ as follows:
\[
\begin{aligned}
\tilde P_a &\coloneqq P_a && \text{for } a\in S\setminus\{v\}, \\
\tilde P_{v^{(w)}} &\coloneqq P_v && \text{for all } w\in\mathrm{Post}(v).
\end{aligned}
\]

We verify feasibility for each type of edge in $E^v$ as defined in \eqref{eq:Ev}.

\paragraph{Edges in $E_1$}
These are edges $(a,b,i)\in E$ with $a\neq v$, $b\neq v$.
The inequality
\[
\gamma^2 \tilde P_a = \gamma^2 P_a \succeq A_i^\top P_b A_i =A_i^\top \tilde P_b A_i
\]
holds by feasibility for $G$.

\paragraph{Edges in $E_2$}
These are edges $(a,v^{(w)},i)$ with $(a,v,i)\in E$ and $a\neq v$.
The required inequality is
\[
\gamma^2 \tilde P_a = \gamma^2 P_a \succeq A_i^\top P_v A_i
=A_i^\top \tilde P_{v^{(w)}} A_i
,
\]
which holds because $(a,v,i)\in E$ in $G$.

\paragraph{Edges in $E_3$}
These are edges $(v^{(w)},w,i)$ with $(v,w,i)\in E$ and $w\neq v$.
We must check
\[
\gamma^2 \tilde P_{v^{(w)}} \succeq A_i^\top P_w A_i,
\]
which holds by feasibility for $G$ since $\tilde P_{v^{(w)}}=P_v$.

\paragraph{Edges in $E_4$}
These are edges $(v^{(v)}, v^{(w)},i)$ with $(v,v,i)\in E$. The inequality 
\[
\gamma^2 \tilde P_{v^{(v)}} = \gamma^2 P_v \succeq A_i^\top P_v A_i = A_i^\top \tilde P_{v^{(w)}} A_i
\]
is satisfied by feasibility for $G$.

Thus $(\gamma,\{\tilde P_a\})$ is feasible for $G^v$.
Therefore every feasible solution for $G$ yields a feasible solution for $G^v$ with the same objective value, which concludes the proof.
\end{proof}
\fi

\section{THE TIGHT SUBGRAPH}\label{sec:tight}
In this section, we analyze the structure of the \emph{tight subgraph}, defined by the constraints that are active at a given solution of \Cref{prob:graph_opt}. This analysis reveals when the current path-complete graph already provides a tight bound on the JSR and when additional forward lifts may still improve the bound, which will guide the design of the algorithm in \Cref{sec:algorithm}.
\subsection{Definition and useful property}

The definition of \emph{tight subgraph} is formalized below.

\begin{definition}[Tight subgraph]\label{def:tight_subgraph}
Given a directed labeled graph $G=(S,E)$, let $(\gamma,\{P_a\}_{a\in S})$ be a feasible solution of \Cref{prob:graph_opt}.  
The \emph{tight subgraph} associated with this solution is the directed labeled graph $\tilde G=(S,\tilde E)$, where $\tilde E \subseteq E$ is defined as
\begin{equation*}
\tilde E
\coloneqq
\left\{
(a,b,i)\in E \;\middle|\;
\lambda_{\min}\!\left(\gamma^2 P_a - A_i^\top P_b A_i\right)=0
\right\}.
\end{equation*}
\smallskip
\end{definition}

In the next proposition, we show that solving \Cref{prob:graph_opt} on the original graph or on the tight subgraph yields the same upper bound on the JSR.\footnote{In other words, the set of tight constraints forms a
\emph{support set}.} While this property is well known for convex programs, note that \Cref{prob:graph_opt} is non-convex. However, for any fixed $\gamma \ge 0$, it reduces to a convex semidefinite feasibility problem.
\begin{proposition}
\phantomsection
\label{prop:opt_tight}
Let $G=(S,E)$ be a directed labeled graph, and suppose that \Cref{prob:graph_opt} admits an optimal solution $(\gamma^\star ,\{P_a^\star \}_{a\in S})$ on $G$. Let $\tilde G=(S,\tilde E)$ be the tight subgraph associated with this solution.
Then $\gamma^\star (\tilde G) = \gamma^\star (G)$.
\end{proposition}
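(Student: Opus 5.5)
The plan is to prove the two inequalities separately. The inequality $\gamma^\star(\tilde G)\le\gamma^\star(G)$ is immediate. Since $\tilde E\subseteq E$, every feasible pair $(\gamma,\{P_a\})$ for \Cref{prob:graph_opt} on $G$ is also feasible on $\tilde G$, because it satisfies a subset of the same constraints. For the reverse inequality I argue by contradiction. Suppose $\gamma^\star(\tilde G)<\gamma^\star$, where $\gamma^\star=\gamma^\star(G)$. Then there is a feasible solution $(\gamma',\{Q_a\}_{a\in S})$ on $\tilde G$ with $0\le\gamma'<\gamma^\star$. (If $\gamma^\star=0$ the claim is trivial, so assume $\gamma^\star>0$.) The goal is to combine $Q$ with the optimal $P^\star$ to build a feasible solution on $G$ whose value is strictly below $\gamma^\star$.

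The problem is not jointly convex in $(\gamma,P)$, so I fix $\gamma=\gamma^\star$ and use convexity in $P$ alone. For $t>0$ set $P_a(t)\coloneqq P_a^\star+tQ_a$, which is $\succ 0$. The constraints then split into two groups.
\begin{itemize}
\item \emph{Tight edges} $(a,b,i)\in\tilde E$. Feasibility of $Q$ with $\gamma'<\gamma^\star$ and $Q_a\succ0$ gives
\[
(\gamma^\star)^2Q_a-A_i^\top Q_bA_i\succeq \bigl((\gamma^\star)^2-\gamma'^2\bigr)Q_a\succ0 .
\]
Adding the PSD matrix $(\gamma^\star)^2P^\star_a-A_i^\top P^\star_bA_i$ shows that $(\gamma^\star)^2P_a(t)-A_i^\top P_b(t)A_i\succ0$ for every $t>0$.
\item \emph{Non-tight edges} $(a,b,i)\in E\setminus\tilde E$. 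Here $\lambda_{\min}\bigl((\gamma^\star)^2P^\star_a-A_i^\top P^\star_bA_i\bigr)>0$ by definition of the tight subgraph. The minimum eigenvalue is continuous, so this constraint stays strictly positive definite after adding $t\bigl((\gamma^\star)^2Q_a-A_i^\top Q_bA_i\bigr)$, provided $t$ is small enough.
\end{itemize}
Since $E$ is finite, a single $t>0$ works for all edges. With this $t$, the pair $(\gamma^\star,\{P_a(t)\})$ satisfies every constraint of $G$ strictly.

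Finally I decrease $\gamma$. Each map $\gamma\mapsto\lambda_{\min}\bigl(\gamma^2P_a(t)-A_i^\top P_b(t)A_i\bigr)$ is continuous and strictly positive at $\gamma^\star$. Again by finiteness of $E$, there is $\varepsilon>0$ such that all constraints still hold at $\gamma=\gamma^\star-\varepsilon$. This gives a feasible solution on $G$ with value below $\gamma^\star$, contradicting optimality, and hence $\gamma^\star(\tilde G)\ge\gamma^\star(G)$.

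The main obstacle is the non-convexity in $\gamma$. It is handled by keeping $\gamma$ frozen at $\gamma^\star$ while perturbing $P$. The key point is that $\gamma'<\gamma^\star$ makes the $Q$-perturbation strictly improve every tight constraint, and strict slack in all constraints is what lets us lower $\gamma$ afterwards.
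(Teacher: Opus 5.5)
Your proposal is correct and follows essentially the same argument as the paper. Your perturbation $P^\star + tQ$ is a positive rescaling of the paper's convex combination $\lambda P^\star + (1-\lambda)\tilde P$ (take $t=(1-\lambda)/\lambda$), and, as in the paper, you freeze $\gamma=\gamma^\star$, get strict feasibility on tight edges from $\gamma'<\gamma^\star$ and on non-tight edges by continuity, and then lower $\gamma$ slightly to contradict optimality.
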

\ifextended
\begin{proof}
The proof is given in Appendix~\ref{appendix:proof_equality_tight}.
\end{proof}
\fi
\subsection{Cycle-based lower bound on the JSR}
In this subsection, we consider graphs that are \emph{not} path-complete, in particular directed cycles, for which \Cref{prob:graph_opt} yields \emph{lower} bounds on the JSR (see \Cref{lemma:lb_on_cycle}). While these results do not involve the tight subgraph directly, they serve as auxiliary technical tools for deriving the graph-theoretic optimality certificate in \Cref{thm:certificate_opt}.

We first recall the definition of a directed cycle.
\begin{definition}[Directed cycle]
Let $G=(S,E)$ be a directed labeled graph.  
A directed cycle is a sequence of edges
\[
(a_1,a_2,i_1), (a_2,a_3,i_2), \dots, (a_k,a_1,i_k) \in E,
\]
with $a_1,\dots,a_k$ distinct nodes.
\end{definition}

The following lemma shows that solving \Cref{prob:graph_opt} on a cycle yields a \emph{lower} bound on the JSR. Together with the upper bound provided by \Cref{prob:graph_opt}, this result forms the basis for the optimality certificate of \Cref{thm:certificate_opt} and the stopping criterion of Algorithm~\ref{algorithm:opti}.
\begin{lemma}\label{lemma:lb_on_cycle}
Let $G=(S,E)$ be a directed labeled graph consisting of a single directed cycle. Then $\gamma^\star (G) \le \rho(\mathcal A)$.
\end{lemma}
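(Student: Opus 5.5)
The plan is to compare $\gamma^\star(G)$ with the spectral radius of the product of matrices read along the cycle. Write the cycle as $(a_1,a_2,i_1),\dots,(a_k,a_1,i_k)$ and set $B \coloneqq A_{i_k}\cdots A_{i_2}A_{i_1}$. I will show two inequalities:
\[
\gamma^\star(G)\le \rho(B)^{1/k}
\qquad\text{and}\qquad
\rho(B)^{1/k}\le\rho(\mathcal A).
\]
Together they give the statement.

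\textbf{Second inequality.} This is a standard property of the JSR. For every $j\ge1$, $B^j$ is a product of length $jk$ of matrices in $\mathcal A$. Hence
\[
\rho(B)^{1/k}=\rho(B^j)^{1/(jk)}\le \|B^j\|^{1/(jk)}\le \max_{\sigma\in\Mset^{jk}}\|A_{\sigma_{jk}}\cdots A_{\sigma_1}\|^{1/(jk)} .
\]
Letting $j\to\infty$, the right-hand side tends to $\rho(\mathcal A)$ by the definition of the JSR, since the limit exists and so does the limit along the subsequence $jk$.

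\textbf{First inequality.} It suffices to show that every $\gamma>\rho(B)^{1/k}$ with $\gamma>0$ is feasible for \Cref{prob:graph_opt} on $G$. Fix such a $\gamma$, put $C_m\coloneqq A_{i_m}/\gamma$, and read indices modulo $k$. For each node $a_m$, let $\Pi_{m,t}$ denote the product of the $t$ consecutive scaled matrices met along the cycle starting at $a_m$, that is $\Pi_{m,t}\coloneqq C_{m+t-1}\cdots C_{m+1}C_m$, with $\Pi_{m,0}=I$. Define
\[
P_{a_m}\coloneqq \sum_{t\ge0}\Pi_{m,t}^\top \Pi_{m,t}.
\]
This is a path-dependent analogue of the series solution of a Lyapunov equation. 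Three checks are needed:
\begin{enumerate}
\item \emph{Convergence.} Every $\Pi_{m,t}$ equals a bounded prefix, times $(B/\gamma^k)^{q}$ conjugated by cyclic shifts, times a bounded suffix. Here $q=\lfloor t/k\rfloor$ up to a constant, and the cyclic shifts of $B$ have the same spectral radius $\rho(B)<\gamma^k$. So $\|\Pi_{m,t}\|$ decays geometrically and the series converges.
\item \emph{Positive definiteness.} $P_{a_m}\succeq I\succ0$, because the $t=0$ term is $I$.
\item \emph{Edge constraints.} Splitting off the $t=0$ term gives the identity $P_{a_m}=I+C_m^\top P_{a_{m+1}}C_m$. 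Therefore $\gamma^2P_{a_m}\succeq A_{i_m}^\top P_{a_{m+1}}A_{i_m}$ for every edge of the cycle.
\end{enumerate}
Thus every $\gamma>\rho(B)^{1/k}$ is feasible, and hence $\gamma^\star(G)\le\rho(B)^{1/k}$. In the degenerate case $\rho(B)=0$, the same argument works for every $\gamma>0$, so the infimum is $0$.

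The only step requiring care is the convergence of the series, i.e.\ the geometric decay of the cyclic products. I would handle it through Gelfand's formula applied to $B/\gamma^k$, together with the fact that the cyclic shifts $A_{i_{m-1}}\cdots A_{i_1}A_{i_k}\cdots A_{i_m}$ all have the same spectral radius as $B$, since $XY$ and $YX$ share their nonzero eigenvalues.
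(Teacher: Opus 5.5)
Your proof is correct, and it handles the key inequality by a different route from the paper. The paper composes the $k$ edge constraints around the cycle to obtain $\gamma^{2k}P_{a_1}\succeq B^\top P_{a_1}B$. It then cites the classical fact that the infimal such $\gamma^{2k}$ is $\rho(B)^2$, and concludes $\gamma^\star(G)=\rho(B)^{1/k}$.

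Composition by itself only shows that a feasible solution on the cycle yields a Lyapunov certificate for $B$, i.e.\ $\gamma^\star(G)\ge\rho(B)^{1/k}$. The direction the lemma actually needs, $\gamma^\star(G)\le\rho(B)^{1/k}$, requires distributing a certificate for $B$ back over the $k$ nodes, and the paper does not spell this out. A naive back-propagation such as $P_{a_k}\propto A_{i_k}^\top P_{a_1}A_{i_k}$ can fail to be positive definite.

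Your series $P_{a_m}=\sum_{t\ge0}\Pi_{m,t}^\top\Pi_{m,t}$ supplies exactly this missing step, and the identity $P_{a_m}=I+C_m^\top P_{a_{m+1}}C_m$ even gives strict feasibility. So your argument is self-contained, needing only Gelfand's formula and $\rho(XY)=\rho(YX)$. The paper's route is shorter and also records the matching lower bound, hence the exact value of $\gamma^\star(G)$, which is not needed here. You also derive $\rho(B)^{1/k}\le\rho(\mathcal A)$ from the definition, where the paper simply asserts it.

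For the convergence step, the cleanest formulation is $\Pi_{m,qk+r}=\Pi_{m,r}\,(\Pi_{m,k})^q$ for $0\le r<k$. Here $\Pi_{m,k}$ is a cyclic shift of $B/\gamma^k$ and so has spectral radius $\rho(B)/\gamma^k<1$. This avoids the somewhat loose phrase ``conjugated by cyclic shifts.''
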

\ifextended
\begin{proof}
The constraints of \Cref{prob:graph_opt} read
\[
\gamma^2 P_{a_j} \succeq A_{i_j}^\top P_{a_{j+1}} A_{i_j},
\quad j=1,\dots,k,
\]
with $a_{k+1}=a_1$. Composing along the cycle gives
\[
\gamma^{2k} P_{a_1} \succeq
(A_{i_k}\cdots A_{i_1})^\top
P_{a_1}
(A_{i_k}\cdots A_{i_1}).
\]
The infimum of $\gamma^{2k}$ for which there exists
$P_{a_1}\succ0$ satisfying this inequality is known to be $\rho(A_{i_k}\cdots A_{i_1})^2$~\cite{boyd1994lmi}. Hence
\[
\gamma^\star (G)=\rho(A_{i_k}\cdots A_{i_1})^{1/k}.
\]
Since for any product $A_{i_k}\cdots A_{i_1}$, one has $\rho(\mathcal A)\ge \rho(A_{i_k}\cdots A_{i_1})^{1/k}$,
we obtain $\gamma^\star(G)\le\rho(\mathcal A)$.
\end{proof}
\fi
The following lemma shows that the value of $\gamma^\star(G)$ is determined by the
strongly connected components of $G$.
\begin{lemma}
Let $G=(S,E)$ be a directed labeled graph and
let $\mathcal S(G)\subseteq S$ denote the set of strongly connected
components of $G$.
For each $C \in \mathcal S(G)$, denote by
$G_C=(C,E_C)$ the subgraph induced by $C$.\footnote{The subgraph induced by $C$ is the graph $G_C=(C,E_C)$ where $E_C=\{(a,b,i)\in E : a,b\in C\}$.}
Then $\gamma^\star (G)
=
\max_{C \in \mathcal S(G)}
\gamma^\star (G_C)$.
\label{lemma:strongly_connected_comp}
\end{lemma}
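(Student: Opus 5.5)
The plan is to prove the two inequalities separately.

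\emph{Lower bound} $\gamma^\star(G)\ge\max_{C}\gamma^\star(G_C)$. This is a monotonicity argument. Any feasible $(\gamma,\{P_a\}_{a\in S})$ for \Cref{prob:graph_opt} on $G$ restricts to a feasible solution $(\gamma,\{P_a\}_{a\in C})$ on $G_C$, because $E_C\subseteq E$ and the constraints of $G_C$ only involve nodes of $C$. Taking infima gives $\gamma^\star(G_C)\le\gamma^\star(G)$ for every $C$.

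\emph{Upper bound} $\gamma^\star(G)\le\gamma^\* := \max_C\gamma^\star(G_C)$. Fix $\gamma>\gamma^\*$; it suffices to build a feasible solution of \Cref{prob:graph_opt} on $G$ with value $\gamma$.

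1. Since $\gamma>\gamma^\star(G_C)$ for each $C$, there is a feasible $\{P_a^C\}_{a\in C}$ on $G_C$ with value $\gamma$. Feasibility at any $\gamma'\ge$ a feasible value is immediate, since $\gamma'^2P_a\succeq\gamma^2P_a$.

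2. Edges between components go from $C$ to $C'$ where $C$ precedes $C'$ in the condensation DAG. Fix a topological order $C_1,\dots,C_m$, so that every inter-component edge goes from some $C_j$ to some $C_l$ with $j<l$.

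3. Define $P_a = t_l P_a^{C_l}$ for $a\in C_l$, with positive scalings $t_l$. Scaling preserves the intra-component constraints. Choose the scalings backward, starting from $t_m=1$. Once $t_{l}$ is fixed for all $l>j$, pick $t_j$ large enough that for every edge $(a,b,i)$ with $a\in C_j$ and $b\in C_l$, $l>j$,
\[
\gamma^2 t_j P_a^{C_j}\succeq A_i^\top t_l P_b^{C_l}A_i .
\]
This is possible because $P_a^{C_j}\succ0$: any $t_j\ge t_l\lambda_{\max}(A_i^\top P_b^{C_l}A_i)/(\gamma^2\lambda_{\min}(P_a^{C_j}))$ works, and there are finitely many such edges.

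4. Then every edge of $G$ satisfies its constraint, so $\gamma^\star(G)\le\gamma$. Letting $\gamma\downarrow\gamma^\*$ gives the claim.

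\emph{Edge cases.} A singleton component with no self-loop has no internal constraints, so $\gamma^\star(G_C)=0$ and any $P\succ0$ works for it. The strict margin $\gamma>\gamma^\*$ ensures $\gamma>0$, so the division by $\gamma^2$ is legitimate. This also handles the case where some optima are not attained, since we never use optimal solutions, only feasible ones at $\gamma$.

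The main care point is the ordering of the scalings in step 3. It works only because the condensation is acyclic: each $t_j$ depends solely on scalings of later components, never on its own. This is exactly where strong connectivity matters, since inside a component no such rescaling of a subset is available.
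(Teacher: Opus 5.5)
Your proof is correct and follows essentially the same route as the paper: the lower bound comes from restricting to $G_C$, and the upper bound from topologically ordering the condensation and rescaling per-component feasible solutions at a fixed $\gamma>\max_C\gamma^\star(G_C)$. The only cosmetic difference is the direction of the rescaling: the paper fixes $\alpha_1=1$ and shrinks downstream components (small $\alpha_k$), while you fix $t_m=1$ and enlarge upstream components (large $t_j$), which is equivalent.
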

\ifextended
\begin{proof}
The inequality
\[
\gamma^\star (G)
\ge
\max_{C\in\mathcal S(G)} \gamma^\star (G_C)
\]
is immediate since, for each $C\in\mathcal S(G)$, the constraints of~\Cref{prob:graph_opt} on $G_C$ are a subset of those for $G$. We will therefore prove the reverse inequality.

Let
\[
\bar\gamma \coloneqq
\max_{C\in\mathcal S(G)} \gamma^\star (G_C).
\]
Let $\varepsilon > 0$ and set $\gamma = \bar\gamma + \varepsilon >0$.

Consider the graph obtained by \emph{contracting} each strongly connected
component of $G$, that is, by replacing every component $C\in\mathcal S(G)$
with a single node and keeping all edges $(a,b,i)\in E$ with $a\in C$ and
$b\in C'$ as edges from $C$ to $C'$.
The resulting graph is a directed acyclic graph with $m\coloneqq |\mathcal S(G)|$ nodes.
Hence the components admit a topological ordering
$C_1,\dots,C_m$ such that edges between components are of the form
$C_i \to C_j$ with $i<j$.

We construct a feasible solution $(\gamma, \{P_a\}_{a\in S})$ for \Cref{prob:graph_opt} on $G$ by combining feasible solutions on each
strongly connected component and appropriately rescaling them to satisfy
the inter-component constraints.

For each $k$, since $\gamma>\gamma^\star(G_{C_k})$, there exists a feasible solution
$(\hat\gamma_k,\{P_a^{(k)}\}_{a\in C_k})$ to
\Cref{prob:graph_opt} on $G_{C_k}=(C_k,E_{C_k})$ such that
$\hat\gamma_k<\gamma$. Hence,
\[
\gamma^2 P_a^{(k)} \succeq
\hat\gamma_k^2 P_a^{(k)}
\succeq A_i^\top P_b^{(k)} A_i,
\quad \forall (a,b,i)\in E_{C_k}.
\]

To enforce the constraints between components, we introduce scaling
factors $\alpha_k>0$ and define them along the topological ordering.
Set $\alpha_1=1$, and suppose that $\alpha_1,\dots,\alpha_{k-1}$ have
been fixed. We choose $\alpha_k$ so that, for any edge $(a,b,i)\in E$
with $a\in C_j$, $b\in C_k$, and $j<k$,
\[
\gamma^2 \alpha_j P_a^{(j)}
\succeq
\alpha_k A_i^\top P_b^{(k)} A_i.
\]
Since $\gamma^2 \alpha_j P_a^{(j)} \succ 0$ and
$A_i^\top P_b^{(k)} A_i \succeq 0$, this is ensured by taking
$\alpha_k>0$ sufficiently small.

Define $P_a \coloneqq \alpha_k P_a^{(k)}$ for $a\in C_k$. Then $P_a\succ0$
and, by construction,
\[
\gamma^2 P_a \succeq A_i^\top P_b A_i, \quad \forall (a,b,i)\in E.
\]
Thus $\gamma$ is feasible for \Cref{prob:graph_opt} on $G$, so
$\gamma^\star(G) \le \gamma = \bar\gamma + \varepsilon$ for all
$\varepsilon>0$, hence $\gamma^\star(G) \le \bar\gamma$.
\end{proof}
\fi

The previous two lemmas yield the following result.
\begin{proposition}
\phantomsection
\label{prop:gamma_opt_when_cycle}
Let $G=(S,E)$ be a directed labeled graph. Assume that each strongly connected component of $G$ is a directed cycle or an isolated node\footnote{We call an \emph{isolated node} a strongly connected component consisting of a
single node and no self-loop.}. Then \(
\gamma^\star (G) \le \rho(\mathcal A).
\)
\end{proposition}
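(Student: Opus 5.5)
We combine \Cref{lemma:strongly_connected_comp} with \Cref{lemma:lb_on_cycle}. By \Cref{lemma:strongly_connected_comp},
\[
\gamma^\star(G)=\max_{C\in\mathcal S(G)}\gamma^\star(G_C),
\]
so it suffices to show that $\gamma^\star(G_C)\le\rho(\mathcal A)$ for every strongly connected component $C$. By assumption, each $G_C$ is either a directed cycle or an isolated node, and we treat the two cases separately.

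If $G_C$ is a directed cycle, we first check that the induced subgraph $G_C$ has exactly the edges of the cycle, so that it is ``a graph consisting of a single directed cycle'' in the sense of \Cref{lemma:lb_on_cycle}. We read the hypothesis ``the component is a directed cycle'' as saying precisely that $G_C$ has no chords or parallel edges. This includes the case of a single node carrying exactly one self-loop, which is a cycle of length one. \Cref{lemma:lb_on_cycle} then gives $\gamma^\star(G_C)\le\rho(\mathcal A)$ directly.

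If $C=\{a\}$ is an isolated node, then $E_C=\emptyset$ and the only constraint in \Cref{prob:graph_opt} is $P_a\succ0$. Hence every $\gamma\ge0$ is feasible, so $\gamma^\star(G_C)=0\le\rho(\mathcal A)$, using that the joint spectral radius is nonnegative.

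Taking the maximum over all components yields $\gamma^\star(G)\le\rho(\mathcal A)$. The only step that needs care is the interpretation of ``directed cycle'' as the induced subgraph: if the component had extra edges, such as two parallel edges with different labels, \Cref{lemma:lb_on_cycle} would not apply. We therefore state explicitly that the hypothesis concerns the induced subgraph $G_C$. Everything else is routine bookkeeping with the two lemmas.
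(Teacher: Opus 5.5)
Your proof is correct and follows the paper's argument. You reduce to the components via \Cref{lemma:strongly_connected_comp}, apply \Cref{lemma:lb_on_cycle} to the cycles, and note that $\gamma^\star(G_C)=0$ for isolated nodes. Your explicit reading of ``directed cycle'' as a statement about the induced subgraph $G_C$ (no chords or parallel edges) is a sensible clarification that the paper leaves implicit.
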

\ifextended
\begin{proof}
Let $\mathcal S(G)$ be the set of strongly connected components of $G$.
By \Cref{lemma:strongly_connected_comp}, there exists $C \in \mathcal S(G)$ such that
$\gamma^\star (G)=\gamma^\star (G_C)$.

If $C$ is a directed cycle, \Cref{lemma:lb_on_cycle} gives
$\gamma^\star (G_C) \le \rho(\mathcal A)$.
If $C$ is an isolated node, then $G_C$ has no edge and thus
$\gamma^\star (G_C)=0 \le \rho(\mathcal A)$, concluding the proof.
\end{proof}
\fi
\subsection{Certificate of optimality}\label{sec:opti_certificate}
Building on the previous results, we derive a structural condition on the
tight subgraph that guarantees tightness of the upper bound obtained by solving
\Cref{prob:graph_opt} on a path-complete graph. Before stating the main
theorem, we recall the following elementary result from graph theory.
\begin{lemma}\label{lemma: outgoing_edges_cycle}
Let $G=(S,E)$ be a directed labeled graph such that each node has at most one outgoing edge.
Then every strongly connected component of $G$ is either a directed cycle or an isolated node.
\end{lemma}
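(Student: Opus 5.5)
Let $C$ be a strongly connected component of $G$ and let $G_C=(C,E_C)$ be its induced subgraph; every node of $G_C$ still has at most one outgoing edge. The plan is to split into two cases according to whether $E_C$ is empty.

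If $E_C=\emptyset$, then $C$ must be a single node. Indeed, two distinct nodes in the same strongly connected component are joined by a directed path, and that path uses edges of $E_C$. A single node has no self-loop in this case, so $C$ is an isolated node in the sense of the footnote to \Cref{prop:gamma_opt_when_cycle}.

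If $E_C\neq\emptyset$, then every node $a\in C$ has exactly one outgoing edge inside $E_C$. Strong connectivity together with $|C|\ge 2$ gives each node an out-edge to some node of $C$. If $|C|=1$, the single edge in $E_C$ is a self-loop. Define the successor map $s:C\to C$ by letting $s(a)$ be the head of the unique out-edge of $a$. This is where the out-degree hypothesis is used: it makes $s$ well defined, and $s(a)\in C$ because the out-edge lies in $E_C$. In the case $|C|\ge2$, $a$ has at most one out-edge in $G$, and $a$ needs some out-edge into $C$, so that edge is the unique one.

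Start from any $a_1\in C$ and iterate $s$. Since $C$ is finite, the orbit $a_1,s(a_1),s^2(a_1),\dots$ eventually repeats, and the first repetition closes a directed cycle $Z\subseteq C$ with distinct nodes. The main point is to show $Z=C$. Take any $b\in C$. By strong connectivity there is a directed path from a node of $Z$ to $b$. Any path leaving a node of $Z$ must follow $s$, because each node has only one out-edge, and $s$ maps $Z$ into $Z$. Hence every node reachable from $Z$ lies in $Z$, so $b\in Z$.

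It follows that $C=Z$ and that $E_C$ consists exactly of the $|C|$ cycle edges, since each node contributes one edge. So $G_C$ is a single directed cycle (a self-loop when $|C|=1$). No step is a serious obstacle. The only care needed is to justify that the out-edges stay inside $C$, and that closure of $Z$ under $s$ forces $Z=C$.
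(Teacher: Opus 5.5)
Your proof is correct and follows essentially the same route as the paper's. Both arguments use strong connectivity and the out-degree bound to give each node of a nontrivial component a unique successor inside $C$, follow successors to obtain a directed cycle, and conclude from the fact that nothing outside the cycle is reachable from it that the component is exactly that cycle; yours is just more explicit about the self-loop case $|C|=1$ and about $E_C$ containing only the cycle edges.
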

\ifextended
\begin{proof}
The proof is given in Appendix~\ref{appendix: proof_outgoing_edges_cycle}.
\end{proof}
\fi
 We can now state the following optimality certificate.
\begin{theorem}\label{thm:certificate_opt}
Let $G=(S,E)$ be a path-complete graph, and suppose that \Cref{prob:graph_opt} admits an optimal solution $(\gamma^\star ,\{P_a^\star \}_{a\in S})$ on $G$. Let $\tilde G=(S,\tilde E)$ be the tight subgraph associated with this solution. Assume that in $\tilde G$ every node has at
most one outgoing edge. Then $\gamma^\star (G)=\rho(\mathcal A)$.
\end{theorem}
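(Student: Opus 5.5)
The plan is to sandwich $\gamma^\star(G)$ between two bounds on $\rho(\mathcal A)$: an upper bound coming from path-completeness and a lower bound coming from the structure of the tight subgraph.

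\textbf{Upper bound.} Since $G$ is path-complete, the optimal pair $(\gamma^\star,\{P_a^\star\})$ defines quadratic functions $V_a(x)=x^\top P_a^\star x$. These form a PCLF for $\mathcal A_{\gamma^\star}$ whenever $\gamma^\star>0$. \Cref{thm:ub_jsr} then gives $\rho(\mathcal A)\le\gamma^\star(G)$. If $\gamma^\star=0$, the inequality $\rho(\mathcal A)\le\gamma^\star(G)$ instead follows by applying the same argument to every $\gamma>0$, which is feasible since the feasible set is upward closed in $\gamma$.

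\textbf{Lower bound.} First I would apply \Cref{prop:opt_tight} to obtain $\gamma^\star(\tilde G)=\gamma^\star(G)$; this is where the assumption that the optimum is attained is used. By hypothesis, every node of $\tilde G$ has at most one outgoing edge. \Cref{lemma: outgoing_edges_cycle} therefore shows that every strongly connected component of $\tilde G$ is either a directed cycle or an isolated node. \Cref{prop:gamma_opt_when_cycle} then yields $\gamma^\star(\tilde G)\le\rho(\mathcal A)$, and hence $\gamma^\star(G)\le\rho(\mathcal A)$.

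Combining the two inequalities gives $\gamma^\star(G)=\rho(\mathcal A)$.

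I expect no step to be a real obstacle, since the substantive work already sits in \Cref{prop:opt_tight} and \Cref{lemma:strongly_connected_comp}. The only points needing care are the degenerate case $\gamma^\star=0$ in the upper bound, and noting that a self-loop counts as a directed cycle of length one, so that \Cref{lemma:lb_on_cycle} covers it.
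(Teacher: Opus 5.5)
Your proposal is correct and follows the paper's proof essentially step for step: the upper bound $\rho(\mathcal A)\le\gamma^\star(G)$ comes from \Cref{thm:ub_jsr}, and the lower bound comes from applying \Cref{prop:opt_tight}, \Cref{lemma: outgoing_edges_cycle} and \Cref{prop:gamma_opt_when_cycle} to $\tilde G$. Your handling of the degenerate case $\gamma^\star=0$ (via upward closedness of the feasible set in $\gamma$) and your note that a self-loop is a length-one cycle are valid details that the paper leaves implicit.
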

\ifextended
\begin{proof}
By \Cref{lemma: outgoing_edges_cycle}, each strongly connected component
of $\tilde G$ is either an isolated node or a directed cycle.
Hence, by \Cref{prop:gamma_opt_when_cycle} applied on $\tilde G$,
\(
\gamma^\star (\tilde G)\le \rho(\mathcal A).
\)
Since $\gamma^\star (G)=\gamma^\star (\tilde G)$ by \Cref{prop:opt_tight}, and
$\gamma^\star (G)\ge \rho(\mathcal A)$ by \Cref{thm:ub_jsr},
the result follows.
\end{proof}
\fi
\subsection{Forward lifts and the tight subgraph}
We now study how the structure of the tight subgraph interacts with forward lifts introduced in \Cref{sec:lifting}. In particular, we identify nodes for which performing a forward lift cannot improve the bound on the JSR. The following proposition formalizes this.
\begin{proposition}
Let $G=(S,E)$ be a directed labeled graph, and suppose that \Cref{prob:graph_opt} admits an optimal solution $(\gamma^\star ,\{P_a^\star \}_{a\in S})$ on $G$. Let $\tilde G=(S,\tilde E)$ be the tight
subgraph associated with this solution. Suppose that there exists
$v\in S$ with at most one outgoing edge in $\tilde G$. Then the forward lift of $G$ with
respect to $v$, denoted $G^v$, satisfies $\gamma^\star(G^v)=\gamma^\star(G)$.\label{prop:at_most_1_tight}
\end{proposition}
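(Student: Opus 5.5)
The inequality $\gamma^\star(G^v)\le\gamma^\star(G)$ is already given by \Cref{thm:relaxation_forward}, so I only need to show $\gamma^\star(G^v)\ge\gamma^\star(G)$. I will do this by exhibiting a subgraph of $G^v$ whose optimal value is at least $\gamma^\star(\tilde G)=\gamma^\star(G)$ (\Cref{prop:opt_tight}). Since dropping constraints can only decrease $\gamma^\star$, any subgraph $H$ of $G^v$ satisfies $\gamma^\star(H)\le\gamma^\star(G^v)$. It therefore suffices to find $H\subseteq G^v$ with $\gamma^\star(H)\ge\gamma^\star(\tilde G)$.

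\emph{Case 1: $v$ has no outgoing edge in $\tilde G$.} Then every tight edge avoids $v$ as a source. Its target is either a node of $S\setminus\{v\}$ (edge in $E_1$) or $v$ itself (edge $(a,v,i)$ with $a\neq v$, lifted to $(a,v^{(w)},i)\in E_2$ for any fixed $w\in\mathrm{Post}(v)$). If $\mathrm{Post}(v)=\emptyset$, then $v$ has no copies at all. In that subcase every constraint with target $v$ only constrains $P_v$ from above through a quadratic form, and $v$ can be sent to a sink with arbitrarily small $P_v$, so such edges never bind. Rather than argue this separately, I will reduce it to Case 2 below. Otherwise I fix $w_0\in\mathrm{Post}(v)$ and consider $\tilde G$ with $v$ renamed $v^{(w_0)}$. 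This graph is isomorphic to $\tilde G$ and contained in $G^v$, which gives the bound.

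\emph{Case 2: $v$ has exactly one tight outgoing edge $(v,w^\ast,i^\ast)$.} Define $H$ on nodes $(S\setminus\{v\})\cup\{v^{(w^\ast)}\}$ by replacing $v$ with $v^{(w^\ast)}$ in every edge of $\tilde G$:
\begin{itemize}
\item edges of $\tilde G$ not touching $v$ lie in $E_1$;
\item edges $(a,v,i)$ with $a\neq v$ become $(a,v^{(w^\ast)},i)\in E_2$;
\item if $w^\ast\neq v$, the edge $(v,w^\ast,i^\ast)$ becomes $(v^{(w^\ast)},w^\ast,i^\ast)\in E_3$;
\item if $w^\ast=v$ (a self-loop), it becomes $(v^{(v)},v^{(v)},i^\ast)\in E_4$, since $v\in\mathrm{Post}(v)$.
\end{itemize}
In all cases $H$ is a relabeled copy of $\tilde G$ and a subgraph of $G^v$. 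Hence
\[
\gamma^\star(G)=\gamma^\star(\tilde G)=\gamma^\star(H)\le\gamma^\star(G^v).
\]
Case 1 is handled the same way once some $w_0$ exists, so the only residual issue is $\mathrm{Post}(v)=\emptyset$.

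The main obstacle is making this bookkeeping airtight. The key point is that the copy must be indexed by the unique tight out-neighbour, because with two tight out-edges no single copy carries both, and that is exactly where the argument would fail. The degenerate case $\mathrm{Post}(v)=\emptyset$ also needs care. There I will argue directly via \Cref{lemma:strongly_connected_comp}. The node $v$ has no out-edges, so it is its own strongly connected component with value $0$. Deleting $v$ therefore leaves every other component unchanged, giving $\gamma^\star(\tilde G)=\gamma^\star(\tilde G\setminus v)$, and $\tilde G\setminus v$ is a subgraph of $G^v$ through $E_1$.
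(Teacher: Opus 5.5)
Your proof is correct and is essentially the paper's argument: the paper takes a feasible solution on $G^v$ and sets $P_v:=\hat P_{v^{(\bar b)}}$, with $\bar b$ the unique tight out-neighbour (or any element of $\mathrm{Post}(v)$ when there is none). This is exactly restricting that solution to your embedded copy $H\cong\tilde G$, followed by \Cref{prop:opt_tight}. Your separate treatment of $\mathrm{Post}(v)=\emptyset$ via \Cref{lemma:strongly_connected_comp} is a small bonus, since the paper's proof tacitly assumes some $\bar b\in\mathrm{Post}(v)$ exists.
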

\ifextended
\begin{proof}
Let $v\in S$ with at most one outgoing edge in $\tilde E$. We already know that $\gamma^\star (G^v)\le\gamma^\star (G)$ by
\Cref{thm:relaxation_forward}. Let
$(\gamma,\{\hat P_a\}_{a\in S^v})$ be feasible for
\Cref{prob:graph_opt} on $G^v$. We construct
$(\gamma,\{P_a\}_{a\in S})$ feasible for $\tilde G=(S,\tilde E)$.

If $\{(v,b,i)\in\tilde E\}=\{(v,\bar b,\bar i)\}$, set
\[
P_a\coloneqq\hat P_a \ (a\neq v),\qquad
P_v\coloneqq\hat P_{v^{(\bar b)}} .
\]
If $\{(v,b,i)\in\tilde E\}=\emptyset$, choose
$\bar b\in\mathrm{Post}(v)$ and set $P_v\coloneqq\hat P_{v^{(\bar b)}}$.

We verify feasibility for each $(a,b,i)\in\tilde E$.

If $a,b\neq v$, the edge appears identically in $G^v$, hence
\[
\gamma^2 P_a\succeq A_i^\top P_b A_i .
\]

If $a=v$ and $b\neq v$, then necessarily $b=\bar b$, and $G^v$
contains the edge $(v^{(\bar b)},\bar b,i)$, so
\[
\gamma^2 P_v=\gamma^2 \hat P_{v^{(\bar b)}}\succeq A_i^\top \hat P_{\bar b} A_i=A_i^\top P_{\bar b} A_i.
\]

If $a=b=v$, then $\bar b=v$ and $G^v$ contains
$(v^{(v)},v^{(v)},i)$, and feasibility of $G^v$ gives
\[
\gamma^2 \hat P_{v^{(v)}}\succeq A_i^\top \hat P_{v^{(v)}} A_i ,
\]
which yields $\gamma^2 P_v\succeq A_i^\top P_v A_i$.

If $a\neq v$ and $b=v$, then $(a,v^{(w)},i)\in E^v$ for any
$w\in\mathrm{Post}(v)$, hence
\[
\gamma^2 \hat P_a\succeq A_i^\top \hat P_{v^{(w)}} A_i .
\]
Taking $w=\bar b$ gives $\gamma^2 P_a\succeq A_i^\top P_v A_i$.

Thus $(\gamma,\{P_a\})$ is feasible for $\tilde G$. Therefore every feasible solution for $G^v$ yields a feasible solution for $\tilde G$ with the same objective value, implying $\gamma^\star (G^v)\ge\gamma^\star (\tilde G)$. Since
$\gamma^\star (G)=\gamma^\star (\tilde G)$ by \Cref{prop:opt_tight},
we obtain $\gamma^\star (G^v)\ge\gamma^\star (G)$, concluding the proof.
\end{proof}
\fi
\section{MAIN ALGORITHM}\label{sec:algorithm}
In this section, we combine the forward lift introduced in
\Cref{sec:lifting} with the insights on the tight subgraph from
\Cref{sec:tight} to design an iterative algorithm that refines
path-complete graphs.

In particular, performing a forward lift with respect to a node that
has at most one tight outgoing edge cannot improve the upper bound on the
JSR (see \Cref{prop:at_most_1_tight}). This significantly reduces the search space, allowing us to focus only on
nodes with at least two tight outgoing edges, which are precisely nodes for which lifting may improve the bound.

Moreover, \Cref{thm:certificate_opt} shows that if no such node exists,
then the current graph is already optimal, in the sense that the
resulting bound coincides with the JSR. In this case, the algorithm terminates with a certificate of optimality. 

The resulting procedure is summarized in \Cref{algorithm:opti}.

\begin{algorithm}[h]
\caption{Optimization-Based Graph Lifting}
\label{algorithm:opti}
\begin{algorithmic}[1]

\STATE \emph{Initialization}
\STATE Let $G_0=(S_0,E_0)$ be an arbitrary path-complete graph (e.g., the one in \Cref{fig:pc-graph} if $M=2$).
\STATE Solve \Cref{prob:graph_opt} on $G_0$ to obtain $(\gamma_0,\{P_a^{(0)}\}_{a\in S_0})$.
\STATE Let $\tilde E_0$ be the set of tight edges.

\FOR{$n = 0,1,2,\dots$}

\STATE \emph{Candidate node selection}
\STATE Define
\[
A_n \coloneqq 
\{a\in S_n : |\{(a,b,i)\in \tilde E_n\}| \ge 2 \}.
\]

\IF{$A_n = \emptyset$}
\STATE \textbf{return} $\gamma_n$ \quad (certificate of optimality)
\ENDIF
\STATE Select $v_n \in A_n$ having the largest number of tight outgoing edges. If several nodes attain this maximum, select one of them arbitrarily.
\STATE \emph{Graph lifting}
\STATE Construct the forward lift
\[
G_{n+1} \coloneqq G_n^{v_n}.
\]

\STATE \emph{Optimization step}
\STATE Solve \Cref{prob:graph_opt} on $G_{n+1}$ to obtain
$(\gamma_{n+1},\{P_a^{(n+1)}\}_{a\in S_{n+1}})$.

\STATE Let $\tilde E_{n+1}$ be the set of tight edges.

\ENDFOR

\end{algorithmic}
\end{algorithm}
\begin{remark}
\Cref{thm:certificate_opt} guarantees exactness upon termination, but provides no termination or convergence guarantee. Establishing such guarantees is left for future work.
\end{remark}
\section{NUMERICAL EXPERIMENTS}\label{sec:experiments}
In this section, we evaluate the performance of the proposed algorithm
on randomly generated systems and compare it with the De Bruijn
hierarchy. We recall the
definition of De Bruijn graphs, which form a hierarchical family of graphs indexed
by their order and provide a systematic sequence of increasingly tight
bounds on the JSR. They are therefore commonly used as a baseline when
evaluating path-complete graph constructions.

\begin{definition}[De Bruijn graph]
The \emph{(primal) De Bruijn graph} of order $\ell \in\Ne$ on $\Mset$ is the
directed graph whose nodes are the elements of $\Mset^\ell$.
There is an edge labeled $i\in\Mset$ from $(j_1,\ldots,j_\ell)$ to
$(i,j_1,\ldots,j_{\ell-1})$ for every $(j_1,\ldots,j_\ell)\in\Mset^\ell$.
\end{definition}

Our experimental setup is similar to the protocol of~\cite[Example~4.3]{debauche2026}. We generate $500$ random systems consisting of $M=6$ matrices of
dimension $n=2$. All experiments are performed in Julia using JuMP and
MOSEK on a laptop equipped with an Apple M5 processor and 16GB of RAM.

For each system, we proceed as follows. First, we run
Algorithm~\ref{algorithm:opti}, initialized with the dual\footnote{The dual
De Bruijn graph is obtained by reversing all edges of the (primal)
De Bruijn graph.} De Bruijn graph of order $1$. The algorithm is run
until two successive estimates of the JSR differ by less than $10^{-5}$,
or until no node with two tight outgoing edges\footnote{In the numerical
implementation, tight edges are identified using a prescribed numerical
tolerance to account for finite numerical precision. The theoretical
implications of this tolerance are left for future work.} remains. We denote by
$\gamma$ the resulting estimate.

Next, we solve \Cref{prob:graph_opt} on De Bruijn graphs of increasing order until the resulting estimate matches the accuracy achieved by $\gamma$.

Figure~\ref{fig:nodes_Edges} compares the size of the graphs produced by Algorithm~\ref{algorithm:opti} and by the corresponding De Bruijn graphs achieving the same JSR estimate. When a low-order De Bruijn graph suffices (yellow dots), both approaches yield graphs of comparable size. In contrast, when higher-order graphs are required, our method produces significantly smaller graphs. Across all instances, our graphs are never larger than their De Bruijn counterparts, and are strictly smaller in $46$ out of $500$ cases. A striking example requires $7776$ nodes and $46656$ edges for De Bruijn, while our method produces a graph with only $36$ nodes and $216$ edges.

Figure~\ref{fig:times} compares the computation time required to reach the same estimate. When very small graphs are sufficient, our algorithm is slightly slower than the De Bruijn hierarchy; however, the computation time remains below one second in these cases. For more challenging instances where larger De Bruijn graphs are required, our approach becomes significantly faster since it constructs the graph adaptively rather than growing the full hierarchy. In the most extreme instance of our experiments, the De Bruijn hierarchy required $2165$ seconds, while our approach terminated in $10.5$ seconds.

Overall, these results show that the proposed lifting procedure adapts
the graph structure to the system at hand, producing more
compact certificates than the De Bruijn construction. This advantage
becomes particularly pronounced when high-order graphs are required,
where the exponential growth of the De Bruijn hierarchy leads to a sharp
increase in both graph size and computation time.
\begin{figure}[H]
\centering
\begin{subfigure}{0.68\linewidth}
    \centering
    \includegraphics[width=\linewidth]{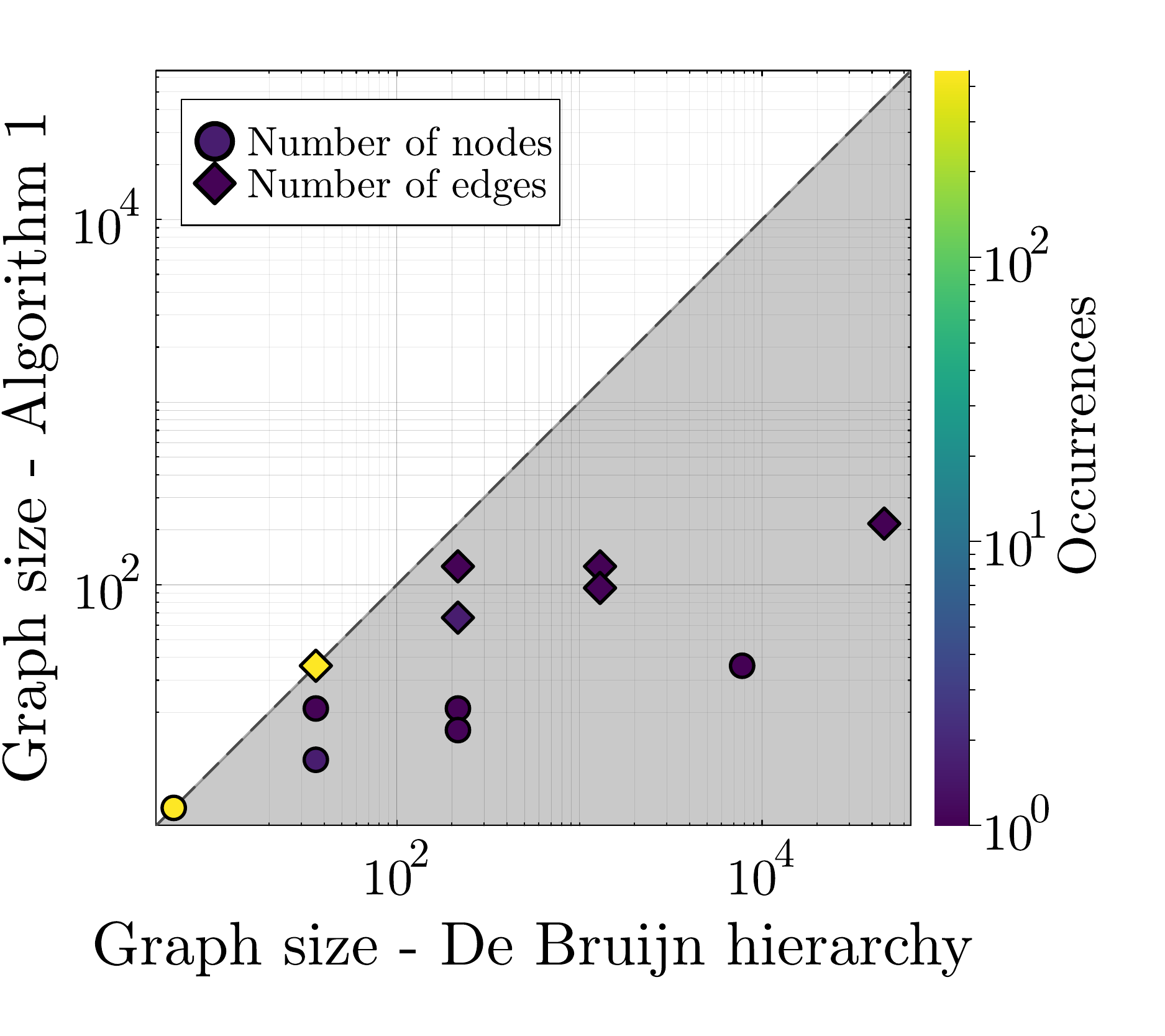}
    \caption{Number of nodes and edges}
    \label{fig:nodes_Edges}
\end{subfigure}
\hfill
\begin{subfigure}{0.68\linewidth}
    \centering
    \hspace{-0.7cm}\includegraphics[width=\linewidth]{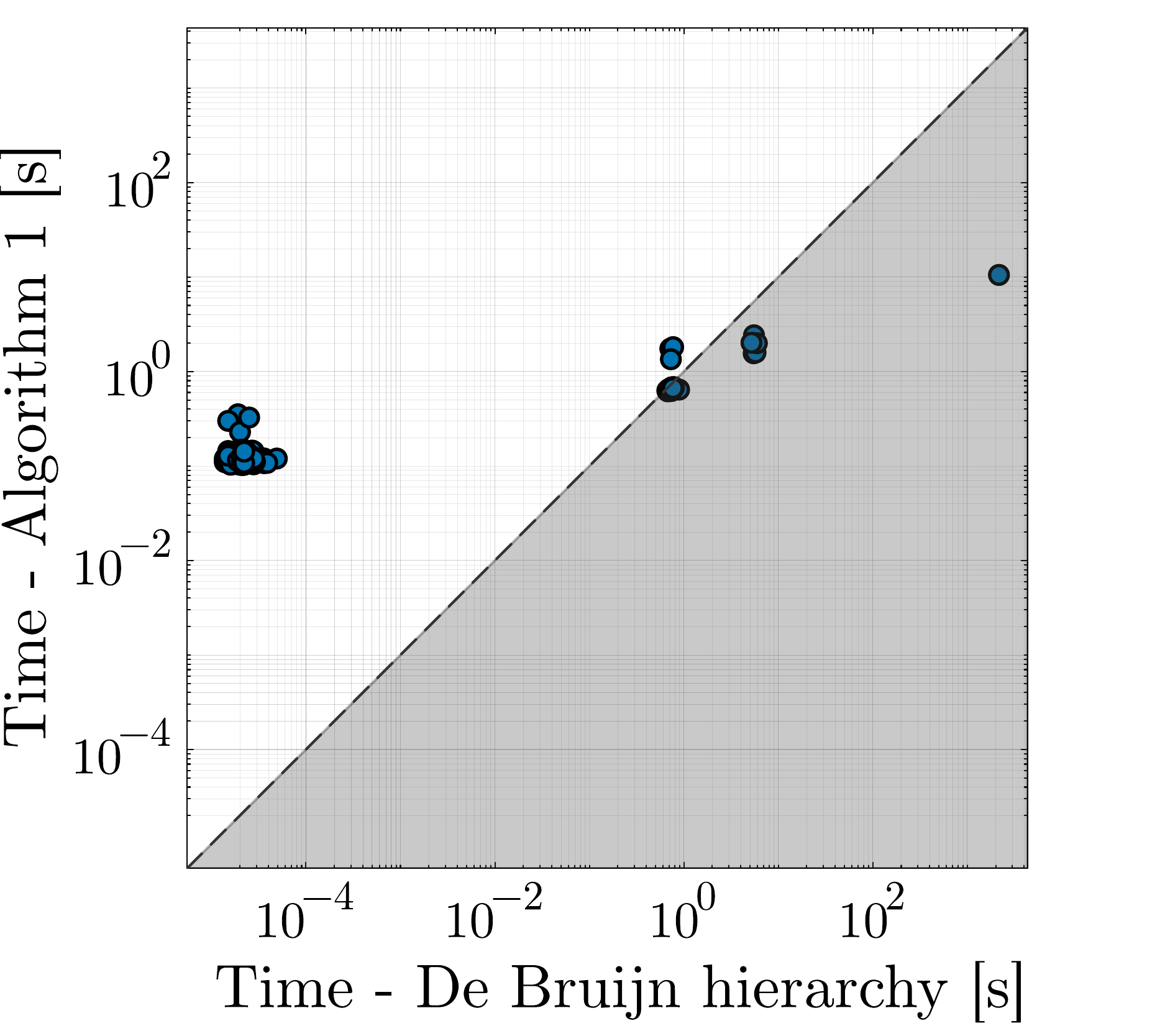}
    \caption{Computation time}
    \label{fig:times}
\end{subfigure}
\caption{Comparison between the De Bruijn hierarchy and the graph construction
from Algorithm~\ref{algorithm:opti} on $500$ random systems with $n=2$ and
$M=6$. The dashed line indicates equality. Points below the line correspond
to instances where the proposed method produces smaller graphs or requires
less computation time. Our method yields
substantially smaller graphs and faster computation times when high-order
De Bruijn graphs are required.}
\label{fig:comparison}
\end{figure}
\smallskip
To highlight the scalability limitations of the De Bruijn hierarchy with respect to the number of modes, we generate random systems and evaluate its performance for several values of $M$. \Cref{tab:comparison_M} reports cases where it fails to reach the same level of accuracy as Algorithm~\ref{algorithm:opti} within a time limit of $30$ minutes.

The table shows that, although De Bruijn graphs involve significantly
larger numbers of nodes, they fail to match the accuracy achieved by
Algorithm~\ref{algorithm:opti} within the time limit. In contrast, the
proposed method attains more accurate estimates using substantially
smaller graphs. Importantly, for all instances, the estimate returned by
Algorithm~\ref{algorithm:opti} is obtained within 40 seconds, and within
10 seconds for values of $M$ smaller than $10$.

\begin{table}[H]
\centering
\begin{tabular}{c|cc|cc}
\hline
 & \multicolumn{2}{c|}{Algorithm~\ref{algorithm:opti}} 
 & \multicolumn{2}{c}{De Bruijn} \\
$M$ & $\gamma$ & Nodes & $\gamma$ & Nodes  \\
\hline
4  & $1.82388$ & $25$ &  $1.82476$ & $4096$   \\
6 &  $2.31039$ & $26$ &  $2.31065$& $1296$  \\
8  & $2.09257$ & $29$ &  $2.092878$  & $512$ \\
10 & $2.5635$ & $73$ & $2.58689$ & $1000$  \\
\hline
\end{tabular}
\caption{Comparison between Algorithm~\ref{algorithm:opti} and the De Bruijn
hierarchy for increasing numbers of modes $M$, on challenging
instances. For each method, we report
the achieved upper bound $\gamma$ and the number of nodes in the corresponding graph. For the De Bruijn hierarchy, $\gamma$ and the number of nodes correspond
to the largest order completed within a 30-minute time limit.}
\label{tab:comparison_M}
\end{table}
Finally, the numerical results can also be put in perspective by considering
the SMT-based approach proposed in~\cite{debauche2026}, which relies on linear
copositive norms, a priori a less expressive template than the quadratic
Lyapunov functions considered here. Their approach can yield substantially more compact
graphs than the De Bruijn hierarchy, at the cost of additional computational
effort, as discussed by the authors. For instance, for one of their random
systems with \(n=2\) and \(M=5\), the De Bruijn hierarchy takes less than
\(1\) second, compared with more than \(300\) seconds for their approach,
whereas our method outperforms the De Bruijn hierarchy on the more
challenging instances.
\section{CONCLUSION}
We proposed a method for refining path-complete graphs in the
Lyapunov-based approximation of the joint spectral radius (JSR).
Our analysis shows that the structure of the tight constraints of the
optimal solution provides a simple certificate to determine whether a
given graph already yields the exact JSR bound. Based on this insight,
we designed an iterative algorithm that selects nodes to lift by
inspecting the tight subgraph and applies a local lifting operation. Numerical experiments show that achieving the same estimate as our algorithm with the De Bruijn hierarchy requires much larger graphs, resulting in significantly higher computation times. This limitation becomes more pronounced as the number of modes increases, with De Bruijn graphs failing to reach the same accuracy within the time limit, while our method remains computationally efficient.

Our results suggest that the structure of tight constraints contains
valuable information about the minimal graph structure required to
certify the JSR. Future work includes investigating alternative lifting
operations and node-selection strategies that further exploit this
structure to design optimization-guided algorithms for constructing
path-complete graphs.
\section{ACKNOWLEDGMENTS}
We thank Guillaume O.~Berger for insightful discussions.
\bibliographystyle{IEEEtran}

\bibliography{refs}

@article{blanchini2012constant,
  title={Constant and switching gains in semi-active damping of vibrating structures},
  author={Blanchini, Franco and Casagrande, Daniele and Gardonio, Paolo and Miani, Stefano},
  journal={International Journal of Control},
  volume={85},
  number={12},
  pages={1886--1897},
  year={2012},
  publisher={Taylor \& Francis}
}

@article{donkers2011stability,
  title={Stability analysis of networked control systems using a switched linear systems approach},
  author={Donkers, MCF and Heemels, WP Maurice H and Van de Wouw, Nathan and Hetel, Laurentiu},
  journal={IEEE Transactions on Automatic control},
  volume={56},
  number={9},
  pages={2101--2115},
  year={2011},
  publisher={IEEE}
}

@book{liberzon2003switching,
  title={Switching in systems and control},
  author={Liberzon, Daniel},
  volume={190},
  year={2003},
  publisher={Springer}
}

@book{jungers2009joint,
  title={The joint spectral radius: theory and applications},
  author={Jungers, Rapha{\"e}l},
  volume={385},
  year={2009},
  publisher={Springer Science \& Business Media}
}

@article{ahmadi2014joint,
  title={Joint spectral radius and path-complete graph Lyapunov functions},
  author={Ahmadi, Amir Ali and Jungers, Rapha{\"e}l M and Parrilo, Pablo A and Roozbehani, Mardavij},
  journal={SIAM Journal on Control and Optimization},
  volume={52},
  number={1},
  pages={687--717},
  year={2014},
  publisher={SIAM}
}

@article{de1948combinatorial,
  title={On a combinatorial problem},
  author={De Bruijn, Nicolaas Govert and Erd{\"o}s, Paul},
  journal={Proceedings of the Section of Sciences of the Koninklijke Nederlandse Akademie van Wetenschappen te Amsterdam},
  volume={51},
  number={10},
  pages={1277--1279},
  year={1948}
}

@INPROCEEDINGS{athanasopoulos2019polyhedral,
  author={Athanasopoulos, N. and Jungers, R. M.},
  booktitle={2019 IEEE 58th Conference on Decision and Control (CDC)}, 
  title={Polyhedral Path-Complete Lyapunov Functions}, 
  year={2019},
  volume={},
  number={},
  pages={3399-3404},
  doi={10.1109/CDC40024.2019.9029905}}

@article{blondel2000boundedness,
author = {Blondel, Vincent and Tsitsiklis, John},
year = {2000},
month = {10},
pages = {135-140},
title = {The Boundedness of All Products of a Pair of Matrices is Undecidable},
volume = {41},
journal = {Systems \& Control Letters},
}

@article{rota1960jsr,
  author  = {Rota, Gian-Carlo and Strang, Gilbert},
  title   = {A note on the joint spectral radius},
  journal = {Indagationes Mathematicae},
  volume  = {22},
  pages   = {379--381},
  year    = {1960}
}

@article{jungers2017characterization,
title = "A Characterization of Lyapunov Inequalities for Stability of Switched Systems",
author = "Jungers, \{Rapha{\"e}l M.\} and Ahmadi, \{Amir Ali\} and Parrilo, \{Pablo A.\} and Mardavij Roozbehani",
year = "2017",
volume = "62",
pages = "3062--3067",
journal = "IEEE Transactions on Automatic Control",
number = "6",
}

@article{parrilo2008approximation,
  author  = {Parrilo, Pablo A. and Jadbabaie, Ali},
  title   = {Approximation of the joint spectral radius using sum of squares},
  journal = {Linear Algebra and its Applications},
  volume  = {428},
  number  = {10},
  pages   = {2385--2402},
  year    = {2008}
}

@article{tsitsiklis1997lyapunov,
  author  = {Tsitsiklis, John N. and Blondel, Vincent D.},
  title   = {The Lyapunov exponent and joint spectral radius of pairs of matrices are hard---when not impossible---to compute and to approximate},
  journal = {Mathematics of Control, Signals, and Systems},
  volume  = {10},
  pages   = {31--40},
  year    = {1997},
  doi     = {10.1007/BF01211562}
}

@article{hernandez2011optimal,
title = {Optimal and MPC Switching Strategies for Mitigating Viral Mutation and Escape},
journal = {IFAC Proceedings Volumes},
volume = {44},
number = {1},
pages = {14857-14862},
year = {2011},
issn = {1474-6670},
author = {Esteban A. Hernandez-Vargas and Richard H. Middleton and Patrizio Colaneri}
}

@inproceedings{debauche2026,
  author    = {Alessandro Abate and Virginie Debauche and Mirco Giacobbe and Diptarko Roy},
  title     = {Succinct Synthesis of Multiple Lyapunov Certificates for Switched Systems},
  booktitle = {Proceedings of the ACM International Conference on Hybrid Systems: Computation and Control (HSCC)},
  year      = {2026}
}

@article{branicky1998multiple,
  author={Branicky, M.S.},
  journal={IEEE Transactions on Automatic Control}, 
  title={Multiple Lyapunov functions and other analysis tools for switched and hybrid systems}, 
  year={1998},
  volume={43},
  number={4},
  pages={475-482}}

@article{lee2006uniform,
title = {Uniform stabilization of discrete-time switched and Markovian jump linear systems},
journal = {Automatica},
volume = {42},
number = {2},
pages = {205-218},
year = {2006},
author = {Ji-Woong Lee and Geir E. Dullerud}
}

@article{bliman2003stability,
title = {Stability Analysis of Discrete-Time Switched Systems Through Lyapunov Functions with Nonminimal State},
journal = {IFAC Proceedings Volumes},
volume = {36},
number = {6},
pages = {325-329},
year = {2003},
author = {Pierre-Alexandre Bliman and Giancarlo Ferrari-Trecate}
}

@book{boyd1994lmi,
author = {Boyd, Stephen and El Ghaoui, Laurent and Feron, Eric and Balakrishnan, Venkataramanan},
title = {Linear Matrix Inequalities in System and Control Theory},
publisher = {Society for Industrial and Applied Mathematics},
year = {1994}
}

@misc{ninite2026iterativegraphliftingautomatic,
      title={Iterative graph lifting for automatic design of path-complete stability certificates}, 
      author={Léa Ninite and Raphaël M. Jungers},
      year={2026},
      eprint={2607.00637},
      archivePrefix={arXiv},
      primaryClass={math.OC},
      url={https://arxiv.org/abs/2607.00637}, 
}
\ifextended
\appendix
\subsection{Proof of \Cref{prop:opt_tight}} \label{appendix:proof_equality_tight}

Let $(\gamma^\star,\{P_a^\star\})$ be an optimal solution to
\Cref{prob:graph_opt} on $G=(S,E)$. Since $\tilde E\subseteq E$, any feasible
solution of \Cref{prob:graph_opt} on $G$ is feasible on $\tilde G$, hence
\[
\gamma^\star(\tilde G)\leq\gamma^\star(G)=\gamma^\star.
\]
Assume for contradiction that
\[
\gamma^\star(\tilde G)<\gamma^\star.
\]
Then there exists a feasible solution
$(\tilde\gamma,\{\tilde P_a\})$ on $\tilde G$ such that
\[
\gamma^\star(\tilde G)\leq\tilde\gamma<\gamma^\star.
\]
For $\lambda\in[0,1]$, define
\[
\hat P_a(\lambda)=\lambda P_a^\star +(1-\lambda)\tilde P_a \succ 0.
\]
For $(a,b,i)\in E$, define
\[
F_{a,b,i}(\lambda)=
{\gamma^\star}^2 \hat P_a(\lambda)-A_i^\top\hat P_b(\lambda)A_i .
\]
We can rewrite it as:
\begin{align*}
F_{a,b,i}(\lambda)&=\lambda\left({\gamma^\star}^2 P_a^\star-A_i^\top P_b^\star A_i \right)+\left(1-\lambda\right)\left( {\gamma^\star}^2 \tilde P_a-A_i^\top\tilde P_b A_i \right)\\
&=\lambda F_{a,b,i}(1)+(1-\lambda)F_{a,b,i}(0),
\end{align*}
where $F_{a,b,i}(1)
\succeq0$ for any $(a,b,i)\in E$ by feasibility of $(\gamma^\star,\{P_a^\star\})$ on $G$.

If $(a,b,i)\in\tilde E$, then feasibility of
$(\tilde\gamma,\{\tilde P_a\})$ on $\tilde G$ and $\tilde\gamma<\gamma^\star$ give
\[
F_{a,b,i}(0)
\succeq
\bigl({\gamma^\star}^2-\tilde\gamma^2\bigr)\tilde P_a
\succ0,
\]
hence $F_{a,b,i}(\lambda)\succ0$ for every $\lambda<1$.

If $(a,b,i)\notin\tilde E$, then, by definition of the tight subgraph,
$F_{a,b,i}(1)\succ0$. By continuity, there exists
$\bar\lambda_{a,b,i}<1$ such that
\[
F_{a,b,i}(\lambda)\succ0
\qquad
\text{for all }\lambda\in(\bar\lambda_{a,b,i},1].
\]

Let
\[
\bar\lambda=
\max_{(a,b,i)\notin\tilde E}\bar\lambda_{a,b,i}
\]
and choose $\lambda\in(\bar\lambda,1)$. Then
$F_{a,b,i}(\lambda)\succ0$ for all $(a,b,i)\in E$, so
$(\gamma^\star,\{\hat P_a(\lambda)\})$ is strictly feasible for
\Cref{prob:graph_opt} on $G$. It follows that there exists
$\varepsilon>0$ such that
$(\gamma^\star-\varepsilon,\{\hat P_a(\lambda)\})$ remains feasible for
\Cref{prob:graph_opt} on $G$, contradicting optimality of $\gamma^\star$.
\subsection{Proof of \Cref{lemma: outgoing_edges_cycle}} \label{appendix: proof_outgoing_edges_cycle}
Let $C$ be a strongly connected component of $G$. If $C$ consists of a
single node with no self-loop, it is an isolated node.

Otherwise, since $C$ is strongly connected, every node in $C$ must have
an outgoing edge whose endpoint also lies in $C$. As each node has at
most one outgoing edge, each node in $C$ has exactly one successor in
$C$. Starting from any node and following successive edges therefore
produces a directed cycle contained in $C$.

If $C$ contained a node outside this cycle, it would have a path to the
cycle but no path back, contradicting strong connectivity. Hence $C$ is
a directed cycle.
\fi
\end{document}